\newtheorem{theorem}{Theorem}[section]
\newtheorem{Proposition}{Proposition}[section]
\newtheorem{Lemma}{Lemma}[section]
\newtheorem{Corollary}{Corollary}[section]
\newtheorem{Remark}{Remark}[section]
\begin{document}
\title{An ordinary cyclotomic function field}
\author{By D. Shiomi}
\date{}
\maketitle
\section{Introduction}
\quad 
Let $\mathbb{F}_q$ be the field with $q$ elements of characteristic $p$.
Let $k=\mathbb{F}_q(T)$ be the rational function field over $\mathbb{F}_q$,
and $A=\mathbb{F}_q[T]$ the associated polynomial ring.
Let $m\in A$ be a monic polynomial.
Let $K_m$, $K_m^+$ be the $m$-th cyclotomic function field, and
its maximal real subfield (see subsection 2.1).
The aim of this paper is to study the structure
 of the Jacobians of $K_m$, $K_m^+$.
 
%
%
For a global function field  $K$ over ${\mathbb{F}}_q$,
we denote by  $J_K$  the Jacobian of $K\bar{\mathbb{F}}_q$,
where $\bar{\mathbb{F}}_q$ is an algebraic closure of ${\mathbb{F}}_q$.
For a prime $l$,  
it is well-known that the $l$-primary subgroup $J_K(l)$ of $J_K$ is isomorphic 
to the following group
\begin{eqnarray*}
J_K(l) \simeq
\left\{
\begin{array}{ll}
\bigoplus_{i=1}^{2g_K}\mathbb{Q}_l/\mathbb{Z}_l  \;\;\;\;\;\; \text{if $l\neq p$} ,\\ \\
\bigoplus_{i=1}^{\lambda_K}\mathbb{Q}_p/\mathbb{Z}_p  \;\;\;\;\; \text{if $l=p$},
\end{array}
\right.
\end{eqnarray*}
where $g_K$ is the genus of $K$, and $\lambda_K$ is 
called the Hasse-Witt invariant of $K$.
In general, $\lambda_K$ satisfies with
$0\le \lambda_K \le g_K$. 
In particular, we shall call $K$ supersingular if $\lambda_K=0$, and
ordinary if $\lambda_K=g_K$.
For more details of the Jacobian, see \cite{ro2}, \cite{mi}.

%
%
Let $g_m$, $g_m^+$ be the genuses of $K_m$, $K_m^+$,
respectively.
Kida-Murabayashi  
 gave explicit formulas for $g_m$, $g_m^+$ 
 for all monic polynomial $m$ (cf. \cite{km}). 
Hence we obtain the $l$-ranks ($l \neq p$) of $J_{K_m}$, and $J_{K_m^+}$.
      
%
%
%
%
On the other hand,
it is more difficult problem to construct
an explicit formula for Hasse-Witt invariants. 
Let $\lambda_m$, $\lambda_m^+$ be the Hasse-Witt invariants
 of $K_m$, $K_m^+$, respectively.
In the previous paper \cite{sh2}, 
the author completely determined  $m \in A$ satisfying $\lambda_m=0$
(and $\lambda_m^+=0$). 

%
%
%
%
In this paper, we shall consider the ordinary case.
Assume that  $m \in A$ is a monic irreducible polynomial of degree $d$.
 We set 
 \[
 s_i(n)=\sum_{a \in A(i)} a^n,
 \]
 where $A(i)$ is the set of 
 monic polynomials of degree $i$.    
For $1 \le n \le q^d-2$, we define $B_n(u)$ as follows
\begin{eqnarray}
B_n(u)=
\left\{
\begin{array}{ll}
\sum_{i=0}^{d-2}
\Bigl(\sum_{j=0}^{i}s_j(n) \Bigr)u^i &  \text {if }  n \equiv 0 \mod q-1,\\\\
\sum_{i=0}^{d-1}s_i(n)u^i & \text {if }  n \not\equiv 0 \mod q-1.
\end{array}
\right.
\end{eqnarray}
 Let $\mathcal{R}_m=A/mA$, and  $\bar{f}(u) \in \mathcal{R}_m[u]$
 be the reduction of  $f(u) \in A[u]$ modulo $m$.
Now we state our main result in this paper.
%
%
%
%
\begin{theorem}
Let $m \in A$ be a monic irreducible polynomial 
of degee $d$.
Then we have the following results.
\begin{enumerate} 
\item 
 $K_m$ is ordinary if and only if
\begin{eqnarray}
\deg \bar{B}_n(u)=
\left\{
\begin{array}{ll}
\Bigl[\frac{l(n)}{q-1}\Bigr]-1  & \text{if }  n \equiv 0 \mod q-1,\\ \\
\Bigl[\frac{l(n)}{q-1}\Bigr] & \text {if }  n \not\equiv 0 \mod q-1
\end{array}
\right.
\label{eq2}
\end{eqnarray}
for all $1 \le n \le q^d-2$.
\item 
 $K_m^+$ is ordinary if and only if
\begin{eqnarray}
\deg \bar{B}_n(u)=\Bigl[\frac{l(n)}{q-1}\Bigr]-1
\end{eqnarray}
for all $1 \le n \le q^d-2$ $(n \equiv 0 \mod q-1)$.
\label{eq3}
\end{enumerate}
Here $[x]$ is the maximal integer satisfying $[x] \le x$,
and $l(n)=a_0+a_1+\cdots +a_{d-1}$ if $n=a_0+a_1q +
\cdots +a_{d-1}q^{d-1} \;
(  0 \le a_i \le q-1)$.  
\label{th11}
\end{theorem}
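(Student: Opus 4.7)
The plan is to use the classical fact that the Hasse--Witt invariant $\lambda_K$ equals the stable rank of the Cartier operator $C$ acting on $H^{0}(X_K,\Omega^{1})$, so that $K$ is ordinary precisely when $C$ is bijective on the space of regular differentials. I therefore reduce each part of the theorem to an invertibility criterion for $C$ on a suitable space of differentials.

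First I would exhibit an explicit basis of $H^{0}(X_m,\Omega^{1})$ indexed by certain integers $n$ in the range $1\le n\le q^{d}-2$, following the approach of the author's previous paper \cite{sh2} and the standard theory of the Carlitz module. If $\lambda$ generates the $m$-torsion of the Carlitz module, then differentials of the form (essentially) $\lambda^{n}\,d\lambda$ provide candidates, and the regularity condition, in particular at the places above $\infty$, cuts out the allowed indices. Since $m$ is irreducible of degree $d$, the Galois group $G_m=(A/mA)^{*}$ is cyclic of order $q^{d}-1$, and each $\omega_{n}$ is an eigenvector for the character of $G_m$ determined by $n\bmod q^{d}-1$. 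The Cartier operator is $p^{-1}$-linear, so it sends $\omega_{n}$ into the span of those $\omega_{n'}$ with $pn'\equiv n\pmod{q^{d}-1}$; the resulting matrix coefficients are expressible through the partial power sums $s_{j}(n)$, and one recognizes $\bar{B}_{n}(u)$ as the generating polynomial of the relevant coefficients along the $p$-orbit of $\omega_{n}$. The two shapes in the definition of $B_{n}(u)$ reflect the vanishing/non-vanishing of $\sum_{a\in\mathbb{F}_q^{*}}a^{n}$, that is, whether $(q-1)\mid n$ or not.

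Next I would partition the characters of $G_m$ into orbits under $n\mapsto pn\pmod{q^{d}-1}$. Since $s_{j}(pn)=s_{j}(n)^{p}$ in characteristic $p$, both shapes of $B_{n}(u)$ are preserved along an orbit, and $\bar{B}_{pn}(u)$ is obtained from $\bar{B}_{n}(u)$ by raising coefficients to the $p$-th power; in particular, the non-vanishing of the leading coefficient of $\bar{B}_{n}(u)$ is an orbit-invariant property. The operator $C$ restricts to a cyclic shift along each such orbit, and its determinant is, up to a nonzero scalar, the product of the leading coefficients of $\bar{B}_{n}(u)$ for the representatives of the orbit. The upper bound $\deg\bar{B}_{n}(u)\le[l(n)/(q-1)]$ (or $\le[l(n)/(q-1)]-1$ when $(q-1)\mid n$) is a straightforward degree estimate on $B_{n}(u)\in A[u]$; equality for every $n$ with $1\le n\le q^{d}-2$ is then equivalent to the non-vanishing of every orbit-determinant, hence to the bijectivity of $C$, proving part~(1). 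Part~(2) follows by restriction: $K_{m}^{+}$ is the fixed field of $\mathbb{F}_q^{*}\subset G_m$, so $H^{0}(X_m^{+},\Omega^{1})$ is precisely the $\mathbb{F}_q^{*}$-invariant subspace of $H^{0}(X_m,\Omega^{1})$, which picks out the $\omega_{n}$ with $(q-1)\mid n$, and the same orbit analysis yields the criterion.

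The main obstacle will be the explicit computation of $C(\omega_{n})$ in the chosen basis and the identification of the leading coefficient of $\bar{B}_{n}(u)$ as the governing matrix entry. This requires a careful expansion via the Carlitz exponential together with a delicate manipulation of the power sums $s_{j}(n)$, and the appearance of the partial sums $\sum_{j\le i}s_{j}(n)$ in the case $(q-1)\mid n$ must be tracked precisely; presumably this rests on the same Carlitz-module machinery developed in \cite{sh2} for the supersingular case, adapted to produce the ordinary (maximal-rank) criterion rather than the total-vanishing one.
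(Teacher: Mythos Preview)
Your approach via the Cartier operator is genuinely different from the paper's, which is purely zeta-function based. The paper never touches differentials or the Cartier operator: it uses the factorization $Z_m(u)=\prod_{\chi\neq\chi_0}\Phi_\chi(u)$ of the zeta polynomial into Dirichlet $L$-factors (Proposition~2.1), identifies $\bar\Phi_{\omega^n}(u)$ with $\phi_*(\bar B_n(u))$ via the Teichm\"uller lift (Proposition~2.2), and invokes the standard fact $\lambda_m=\deg\bar Z_m(u)$ to obtain $\lambda_m=\sum_n\deg\bar B_n(u)$ (Corollary~2.1). Gekeler's power-sum estimates (Proposition~2.4) give the termwise bound $\deg\bar B_n(u)\le[l(n)/(q-1)]$ (resp.\ $[l(n)/(q-1)]-1$), and a short combinatorial count (Lemma~3.1) together with the Kida--Murabayashi genus formula shows that these upper bounds sum exactly to $g_m$ (resp.\ $g_m^+$). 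Ordinariness is then equivalent to equality in every term. Once the ingredients are assembled, the actual proof is a few lines.

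Your plan has a real gap at its pivotal step. You assert that the determinant of the Cartier operator on a $p$-orbit equals, up to units, the product of the leading coefficients of the $\bar B_n(u)$, but you give no mechanism linking the explicitly defined polynomial $B_n(u)$ (a generating function of the power sums $s_i(n)$) to matrix entries of $C$ in any basis of regular differentials. In the paper $\bar B_n$ enters only because it is the mod-$p$ reduction of an $L$-factor; its relevance to $\lambda_m$ comes from the zeta interpretation of the Hasse--Witt invariant, not from any Cartier computation. To make your route work you would first have to prove that the $\omega^n$-eigenspace of $H^0(X_m,\Omega^1)$ has dimension exactly $[l(n)/(q-1)]$ (resp.\ $[l(n)/(q-1)]-1$)---this is Lemma~3.1 plus the genus formula in disguise---and then identify the Cartier matrix on that eigenspace with a companion-type matrix built from $\bar B_n(u)$. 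Neither step is carried out, and the second is not at all clear; your own acknowledgment that ``the main obstacle will be the explicit computation of $C(\omega_n)$'' is exactly where the argument is missing.
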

%
%
%
%
Assume that $q \neq p$.
By using Theorem \ref{th11}, we will completely
determine a monic irreducible polynomial $m$
such that $K_m$ is ordinary (see Corollary \ref{co31}).
On the other hand, in the case $q=p$, 
it is more difficult problem to determine such $m$.
In section 4, we shall give some examples of ordinary cyclotomic 
function fields.
%
%
%
%
\begin{Remark}
{\rm 
The above polynomial $B_n(u)$ is closely related to 
characteristic $p$ zeta function (cf. \cite{go2}).
}
\end{Remark}
%
%
%
%
%
%
%
%
%
%
\section{Preparations}
%
%
%
%
%
%
%
%
\subsection{Cyclotomic function fields}
\quad In this subsection, we shall provide basic facts about 
cyclotomic function fields. For details, see \cite{ha}, \cite{ro2}, \cite{go2}.

Let $\bar{k}$ be an algebraic closure of $k$. 
For $x \in \bar{k}$ and $m \in A$,  we define the following action
\begin{eqnarray*}
m * x= m(\varphi+\mu)(x),
\end{eqnarray*}
where $\varphi$, $\mu$ are $\mathbb{F}_q$-linear isomorphisms of
 $\bar{k}$ defined by $\varphi:x \mapsto x^q$, and 
$\mu:x\mapsto Tx$, respectively.
 By this action, $\bar{k}$ becomes $A$-module. 
 This $A$-module is called the Carlitz module.
 For a monic polynomial $m \in A$, we set
 \[
 \Lambda_m=\{ x \in \bar{k} \; : \; m*x=0 \}.
  \]
Let $K_m=k(\Lambda_m)$, which is called the
$m$-th cyclotomic function field.
One shows that $K_m/k$ is a Galois extension, and
have the group isomorphism
%
%
%
%
\begin{eqnarray}
\text{Gal}(K_m/k)\simeq (A/mA)^{\times},
\label{eq4}
\end{eqnarray}
where $\text{Gal}(K_m/k)$ is the Galois group of $K_m/k$.
We regard $\mathbb{F}_q^{\times} \subseteq (A/mA)^{\times}$,
and let $K_m^+$ be the intermediate field of $K_m/k$
corresponding to $\mathbb{F}_q^{\times}$.
The field $K_m^+$ is called the maximal real subfield of $K_m$.
Let $P_{\infty}$ be the prime of $k$ with the valuation 
$\text{ord}_{\infty}$ satisfying $\text{ord}_{\infty}(1/T)=1$.
Then $P_{\infty}$ splits completely in $K_m^+/k$, 
and any prime of $K_m^+$ 
over $P_{\infty}$ is totally ramified in $K_m/K_m^+$.
Hence we have 
\[
K_m^+=k_{\infty} \cap K_m,
\]
where $k_{\infty} $ is the associated completion of $k$ by $P_{\infty}$.

%
%
%
%
%
%
%
%
\subsection{Zeta functions}
\quad 
In this subsection, we shall study the zeta function of
cyclotomic function fields. For more references, see \cite{gr}, \cite{ro2}. 

For a global function field $K$ over $\mathbb{F}_q$, 
we define the zeta function of $K$ by 
\begin{eqnarray*}
\zeta(s,K)=\prod_{\mathcal{P}:\text{\rm prime}} \Bigl(1-\frac{1}
{{\mathcal{N} \mathcal{P}}^s}\Bigr)^{-1},
\end{eqnarray*}
where $\mathcal{P}$ runs through all primes of $K$, 
and $\mathcal{N} \mathcal{P}$ is the number of elements of 
the reduce class field of  $\mathcal{P}$.
Then $\zeta(s,K)$ converges absolutely for $\text{\rm Re}(s)>1$.
%
%
%
%
%
%
%
\begin{theorem}
Let $g_K$ be the genus of $K$.
Then there is a polynomial
$Z_K(u) \in \mathbb{Z}[u]$ of degree $2g_K$ satisfying
\begin{eqnarray*}
\zeta(s,K)=\frac{Z_K(q^{-s})}{(1-q^{-s})(1-q^{1-s})}.
\end{eqnarray*}
\label{th21}
\end{theorem}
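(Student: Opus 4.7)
The plan is to rewrite the Euler product as a generating function over effective divisors and then use Riemann--Roch to evaluate it in closed form. First I would expand
\[
\zeta(s,K)=\prod_{\mathcal{P}}\Bigl(1-(\mathcal{N}\mathcal{P})^{-s}\Bigr)^{-1}
=\sum_{D\ge 0}(\mathcal{N}D)^{-s}=\sum_{D\ge 0}u^{\deg D},
\]
where $u=q^{-s}$ and $D$ ranges over effective divisors of $K\bar{\mathbb{F}}_q$ defined over $\mathbb{F}_q$, and the rearrangement is justified in the region of absolute convergence $|u|<q^{-1}$. Setting $a_n=\#\{D\ge 0:\deg D=n\}$, this gives $\zeta(s,K)=Z(u):=\sum_{n\ge 0}a_n u^n$, so the issue is to identify $Z(u)$ as a rational function with the claimed denominator and a numerator in $\mathbb{Z}[u]$ of degree $2g_K$.

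Next I would rewrite $a_n$ in terms of divisor classes. By F.~K.~Schmidt's theorem, $K$ has a divisor of degree $1$, so there are exactly $h$ divisor classes in each degree, where $h$ is the class number. Each class $[D]$ contributes $(q^{\ell(D)}-1)/(q-1)$ effective divisors, where $\ell(D)=\dim_{\mathbb{F}_q}L(D)$. By Riemann--Roch, $\ell(D)=\deg D - g_K + 1$ whenever $\deg D\ge 2g_K-1$. Splitting the sum accordingly,
\[
Z(u)=\sum_{n=0}^{2g_K-2}a_n u^n+\frac{h}{q-1}\sum_{n\ge 2g_K-1}\bigl(q^{\,n-g_K+1}-1\bigr)u^n.
\]
The tail is the sum of two geometric series and collapses to $\dfrac{h}{q-1}\Bigl(\dfrac{q^{g_K}u^{2g_K-1}}{1-qu}-\dfrac{u^{2g_K-1}}{1-u}\Bigr)$. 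Clearing the denominator $(1-u)(1-qu)$ produces
\[
(1-u)(1-qu)Z(u)=P(u)\in\mathbb{Q}[u],
\]
a polynomial because all contributions of degree $\ge 2g_K+1$ cancel by construction. Integrality of $P$ follows from integrality of the $a_n$, since the coefficients of $P$ are explicit integer linear combinations of the $a_n$.

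Finally I would pin down $\deg P=2g_K$. The explicit formula shows $P(u)$ has degree at most $2g_K$; the top coefficient $[u^{2g_K}]P(u)$ works out, after telescoping the geometric pieces against the low-degree terms, to be $q^{g_K}h/(q-1)$ times a nonzero constant, hence nonzero. The main obstacle I anticipate is bookkeeping: one must verify that the finite piece $\sum_{n\le 2g_K-2}a_n u^n$, once multiplied by $(1-u)(1-qu)$, combines cleanly with the tail so that the coefficients in degrees $2g_K+1$ and higher vanish, and so that the degree $2g_K$ coefficient is exactly $h$ and nonzero. This is a purely combinatorial identity, but it is the step where Riemann--Roch genuinely intervenes (through the precise range $n\ge 2g_K-1$), and it is what forces the denominator $(1-u)(1-qu)$ rather than a larger one. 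With $P(u)=Z_K(u)$ the theorem is proved.
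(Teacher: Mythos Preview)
The paper does not actually supply a proof of this theorem: Theorem~\ref{th21} is stated as a classical fact, with the surrounding text pointing to \cite{gr} and \cite{ro2} for background, and the paper immediately moves on to apply it to cyclotomic function fields. So there is no ``paper's own proof'' to compare against.

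That said, your outline is exactly the standard Riemann--Roch argument one finds in those references, and it is essentially correct. One small correction: in your last paragraph you assert that the coefficient of $u^{2g_K}$ in $P(u)$ should come out to be $h$. In fact, when you carry out the bookkeeping (using Riemann--Roch once more at degree $2g_K-2$ to handle the canonical class), the top coefficient is $q^{g_K}$, not $h$; alternatively, this drops out immediately from the functional equation $P(u)=q^{g_K}u^{2g_K}P(1/(qu))$ together with $P(0)=1$. Either way the coefficient is nonzero, so your conclusion $\deg P=2g_K$ stands. Your earlier phrasing ``$q^{g_K}h/(q-1)$ times a nonzero constant'' is closer in spirit but still not the final value; it is worth doing the computation once cleanly rather than leaving it as an anticipated obstacle.
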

%
%
%
%

Now we focus on the cyclotomic function field case.
Let $m \in A$ be a monic polynomial of degree $d$. 
Let $\zeta(s,K_m)$, $\zeta(s,K_m^+)$ be zeta functions of $K_m$,
and $K_m^+$, respectively. By  Theorem \ref{th21}, there are polynomials
$Z_m(u)$, and $Z_m^{(+)}(u)$ such that
%
%
%
%
\begin{eqnarray}
\zeta(s,K_m)=\frac{Z_m(q^{-s})}{(1-q^{-s})(1-q^{1-s})},\label{eq5}\\ \notag\\
\zeta(s,K_m^+)=\frac{Z_m^{(+)}(q^{-s})}{(1-q^{-s})(1-q^{1-s})}. \label{eq6}
\end{eqnarray}
Let $X_m$ be the group of primitive Dirichlet characters modulo $m$,
and $X_m^+$ is the subgroup of $X_m$ defined by
\[
X_m^+=
\{ \chi \in X_m : \chi(a)=1 \text{ for all } a \in \mathbb{F}_q^{\times} \} .
\]
By the same arguments in subsection 2.2 in \cite{sh1}, we have
%
%
%
%
\begin{eqnarray}
\zeta(s,K_m)=
\Bigl\{ \prod_{\chi \in X_m}L(s,\chi)  \Bigr\} (1-q^{-s})^{-[K_m^+:k]},\label{eq7}\\  \notag \\
\zeta(s,K_m^+)=
\Bigl\{ \prod_{\chi \in X_m^+}L(s,\chi)  \Bigr\} (1-q^{-s})^{-[K_m^+:k]}\label{eq8}.
\end{eqnarray}
Here an $L$-function $L(s, \chi)$  is defined by
\[
L(s,\chi)=\sum_{a:monic}\frac{\chi(a)}{N(a)^s}, 
\]
where $a$ runs through all monic polynomials of $A$, and
$N(a)=q^{\deg a}$.  
Let  $\chi_0$ be the trivial character.
We can check that
\begin{eqnarray*}
L(s,\chi)=
\left\{
\begin{array}{lll}
1/(1-q^{1-s})\;\;\;\;\;\;\;\;\;\text{if } \chi =\chi_0,\\ \\
\sum_{i=0}^{d-1}s_i(\chi)q^{-si}\;\;\; otherwise,
\end{array}
\right.
\end{eqnarray*}
where $s_i(\chi)=\sum_{a:monic \atop \deg(a)=i}\chi(a)$ for  $i=0, 1,...,d-1$.
We set
\begin{eqnarray*}
\Phi_{\chi}(u)=
\left\{
\begin{array}{lll}
\Bigl(\sum_{i=0}^{d-1}s_i(\chi)u^{i} \Bigr)/(1-u)\;\;\;\;\;\;\;\;\text{if }
 \chi \in X_m^+ \; \backslash \; \{\chi_0\},\\ \\
\sum_{i=0}^{d-1}s_i(\chi)u^{i}\;\;\;\;\;\;\;\;\;\;\;\;\;\;\;\;\;\;\;  \;\;\;\;\;  \;\; \text{if } 
\chi \in X_m^-,
\end{array}
\right.
\end{eqnarray*}
where $X_m^-=X_m \; \backslash \;X_m^+$.
From equations (\ref{eq5}) (\ref{eq6}) (\ref{eq7}) (\ref{eq8}),
 we obtain the following result.
%
%
%
%
%
%
\begin{Proposition}
\begin{eqnarray}
(1) \;\;Z_m(u)&=&
 \prod_{\chi \in X_m \atop \chi \neq \chi_0} \Phi_{\chi}(u), \\ \notag \\
(2) \;Z_m^{(+)}(u)&=&
 \prod_{\chi \in X_m^{+} \atop \chi \neq \chi_0} \Phi_{\chi}(u).
  \text{\quad \quad \quad \quad \quad \quad \quad \quad \quad \quad \quad \quad \quad \quad}.
 \end{eqnarray}
 \label{pr21}
 \end{Proposition}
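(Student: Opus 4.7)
The strategy is purely algebraic manipulation: combine formulas (\ref{eq5}) and (\ref{eq7}) for $\zeta(s,K_m)$, and (\ref{eq6}) and (\ref{eq8}) for $\zeta(s,K_m^+)$, tracking factors of $(1-u)$ as $u=q^{-s}$. The trivial character contributes $L(s,\chi_0)=1/(1-qu)$, which cancels the $(1-qu)$ in the denominator of $\zeta$, while each non-trivial $\chi$ contributes the polynomial $L(s,\chi)=\sum_{i=0}^{d-1}s_i(\chi)u^i$ of degree at most $d-1$.

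The crucial intermediate step is showing that $L(s,\chi)$ is divisible by $(1-u)$ whenever $\chi\in X_m^+\setminus\{\chi_0\}$, so that $\Phi_\chi(u)$ is indeed a polynomial. I would prove this by direct evaluation at $u=1$:
\[
L(1,\chi)\;=\;\sum_{i=0}^{d-1}s_i(\chi)\;=\;\sum_{\substack{a\text{ monic}\\ \deg a<d}}\chi(a)\;=\;\sum_{\substack{a\text{ monic},\ \deg a<d\\ \gcd(a,m)=1}}\chi(a),
\]
the last equality because $\chi$ vanishes on polynomials sharing a factor with $m$. The monic representatives of degree $<d$ coprime to $m$ form a complete set of coset representatives for $(A/mA)^{\times}/\mathbb{F}_q^{\times}$: each non-zero class in $A/mA$ has a unique representative of degree $<d$, and exactly one of its $q-1$ scalar multiples is monic. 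Since $\chi\in X_m^+$ is trivial on $\mathbb{F}_q^{\times}$, it descends to a character on this quotient, and this descended character is non-trivial for $\chi\neq\chi_0$; orthogonality then forces $L(1,\chi)=0$. For $\chi\in X_m^-$ no such cancellation is claimed, and $\Phi_\chi(u)=L(s,\chi)$ is used as-is.

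It remains to do the bookkeeping. Substituting into (\ref{eq7}),
\[
\zeta(s,K_m)\;=\;\frac{1}{1-qu}\cdot(1-u)^{[K_m^+:k]-1}\prod_{\chi\neq\chi_0}\Phi_\chi(u)\cdot(1-u)^{-[K_m^+:k]}\;=\;\frac{\prod_{\chi\neq\chi_0}\Phi_\chi(u)}{(1-u)(1-qu)},
\]
since there are exactly $[K_m^+:k]-1=|X_m^+\setminus\{\chi_0\}|$ non-trivial even characters. Comparing with (\ref{eq5}) yields (1). The analogous calculation for (\ref{eq6}) and (\ref{eq8}), using only even characters and $|X_m^+|=[K_m^+:k]$, yields (2). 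The single genuine obstacle is the orthogonality step establishing $L(1,\chi)=0$ for non-trivial even $\chi$; everything else reduces to straightforward index counting.
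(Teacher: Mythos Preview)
Your argument is correct and follows exactly the route the paper has in mind: the paper's own ``proof'' is the single sentence ``From equations (\ref{eq5}) (\ref{eq6}) (\ref{eq7}) (\ref{eq8}), we obtain the following result,'' and you have simply spelled out that computation, correctly counting the $[K_m^+:k]-1$ factors of $(1-u)$ coming from the non-trivial even characters. The orthogonality step you supply to show $\sum_i s_i(\chi)=0$ for non-trivial $\chi\in X_m^+$ is exactly what the paper records separately in Remark~2.1 (stated there without proof); strictly speaking it is not needed for the identity of rational functions in the Proposition, only for the subsequent observation that each $\Phi_\chi(u)$ is a polynomial.
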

%
%
%
%
\begin{Remark}
{\rm 
Assume that $\chi \in X_m^+ \; \backslash \; \{\chi_0\}$.
Noting that  $\sum_{i=0}^{d-1}s_i(\chi)=0$, 
we have
%
%
%
%
\begin{eqnarray}
\Phi_{\chi}(u)=\sum_{i=0}^{d-2}\; \Bigl(\sum_{j=0}^{i} s_j(\chi) \; \Bigr)u^{i}. \label{eq11}
\end{eqnarray}
In particular, $\Phi_{\chi}(u)$ is a polynomial.
}
\end{Remark}

 %
 %
 %
 %
 %
 %
 %
 %
 %
 %
\subsection{The Hasse-Witt invarinat}
\quad
Our goal in this subsection is to express $\lambda_m$ 
and $\lambda_m^+$ in terms of $B_n(u)$.
To do this, 
we will study a relation between $B_n(u)$ and $Z_m(u)$
 (and ${Z}^{(+)}_m(u)$).
 For more information, see chapter 8 of \cite{go2}.

Let $m \in A$ be a monic irreducible polynomial of degree $d$.
We denote the $p$-adic field by $\mathbb{Q}_p$.
Fix an algebraic closure $\bar{\mathbb{Q}}$ of $\mathbb{Q}$, 
an algebraic closure $\bar{\mathbb{Q}}_p$ of $\mathbb{Q}_p$,
and an embedding $\sigma:\bar{\mathbb{Q}} \rightarrow \bar{\mathbb{Q}}_p$.
By this embedding, we regard  $\bar{\mathbb{Q}} \subseteq \bar{\mathbb{Q}}_p$.
Let  $\text{ord}_p$ the $p$-adic valuation of $\bar{\mathbb{Q}}_p$
 with $\text{ord}_p(p)=1$. We set
 %
 %
 \[
 M=\mathbb{Q}_p(W),
 \]
 where $W$ is the group of $(p^{de}-1)$-th roots of unity
 (we assume $q=p^e$).
Let $\mathcal{O}_M$ be the valuation ring of $M$. 
Since $M/{\mathbb{Q}}_p$ is unramified, 
the residue class field $\mathcal{F}_M=\mathcal{O}_M/p \mathcal{O}_M$
 consists of  $p^{de}$ elements.
 %
 %
 We notice that
  the image of $\chi \in X_m$ is contained in $\mathcal{O}_M$.
Hence we see that
 \begin{eqnarray*}
\Phi_{\chi}(u) \in \mathcal{O}_M[u] 
\;\;\;(\text{ for} \;\; \chi \in X_m\; \backslash\; \{\chi_0\} \;).
\end{eqnarray*}

%
%
Notice that $\mathcal{R}_m$ and $\mathcal{F}_M$ are 
finite fields with same cardinality.
Hence $\mathcal{R}_m$ is isomorphic to $\mathcal{F}_M$, and
fix an isomorphism $\phi:\mathcal{R}_m \rightarrow \mathcal{F}_M$.
This map derives the group isomorphism 
$\phi_{0}:(A/mA)^{\times} \rightarrow \mathcal{F}_M^{\times}$, 
and the ring isomorphism 
$\phi_{*}:\mathcal{R}_m[u] \rightarrow \mathcal{F}_M[u]$.
Since $p$ is prime to $ {}^{\#}W$ ($=$ the cardinality of $W$), 
we have the following isomorphism
\begin{eqnarray*}
\psi: W \longrightarrow \mathcal{F}_M^{\times}\;\;(\;\zeta \rightarrow
 \zeta \mod p\mathcal{O}_M\;).
\end{eqnarray*}
Put $\omega=\psi^{-1} \circ \phi_0$. 
Then $\omega$ is a generator of $X_m$. Hence we have
%
%
%
%
\begin{eqnarray*}
X_m=\{ \omega^n\; |\; n=0,\;1,\;2,..., \;q^{d}-2 \} .
\end{eqnarray*}
We see that $\omega^n \in X_m^+$  if  $n \equiv 0 \mod q-1$, and 
$\omega^n \in X_m^-$ if $n \not\equiv 0 \mod q-1$.
We notice that
\begin{eqnarray*}
\phi(a^n \mod mA) \equiv \omega^n(a \mod mA) \mod p\mathcal{O}_p
\end{eqnarray*}
for $a \in A \; (0 \le \deg(a) <d)$, and  $n=0,1,..., q^d-2$.
Hence, by the definition of $B_n(u)$, we obtain
\begin{eqnarray*}
\phi_*(\bar{B}_n(u)) =\bar{\Phi}_{\omega^n}(u),
\end{eqnarray*}
where $\bar{\Phi}_{\chi}(u)$ is the reduction of $\Phi_{\chi}(u)$ modulo 
$p \mathcal{O}_M$.
From Proposition \ref{pr21}, we obtain the following results.
%
%
%
%
\begin{Proposition}
\quad 
\begin{eqnarray} 
&&\text{\rm (1) }\phi_*\Bigl(\prod_{n=1}^{q^d-2}\bar{B}_n(u)\Bigr) =\bar{Z}_m(u), \label{eq12}
\;\;\;\;\;\;\;\;\;\;\;\;\;\;\;\;\;\;\;\;\;\;\;\;\;\;\;\;\;\;\;\;\;\;\;\;\;\;\;\;\;\;\;\;\;\;\;\;\;\;\;\;\;\;\;\;
\;\;\;\;\;\;\;\;\;\;\\
&&\text{\rm (2) } \phi_*\Bigl(\prod_{n=1 \atop n \equiv 0 \mod q-1}^{q^d-2} 
\bar{B}_n(u) \Bigr) =\bar{Z}^{(+)}_m(u).\label{eq 13}
\end{eqnarray}
\label{pr22}
 \end{Proposition}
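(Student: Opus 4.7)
The plan is to derive Proposition \ref{pr22} directly from Proposition \ref{pr21}, combined with the two facts established immediately before the statement: the enumeration $X_m = \{\omega^n : 0 \le n \le q^d-2\}$ together with the criterion that $\omega^n \in X_m^+$ if and only if $n \equiv 0 \bmod q-1$, and the identity $\phi_*(\bar{B}_n(u)) = \bar{\Phi}_{\omega^n}(u)$.

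First I would check that for every nontrivial $\chi \in X_m$ the polynomial $\Phi_\chi(u)$ lies in $\mathcal{O}_M[u]$, so that reduction modulo $p\mathcal{O}_M$ can be performed factorwise. For $\chi \in X_m^-$ this is immediate from the definition. For $\chi \in X_m^+ \setminus \{\chi_0\}$ one invokes the rewriting (\ref{eq11}), which shows that $\Phi_\chi(u)$ is a genuine polynomial whose coefficients are integer combinations of values of $\chi$, hence again lie in $\mathcal{O}_M$. This observation is already noted in subsection 2.3, but I would record it explicitly so that the product formula makes sense before reducing.

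For part (1), I would then take Proposition \ref{pr21}(1), reduce both sides modulo $p\mathcal{O}_M$ (the left side making sense because $Z_m(u) \in \mathbb{Z}[u] \subset \mathcal{O}_M[u]$ by Theorem \ref{th21}), and reindex the product using $\omega$:
\[
\bar{Z}_m(u) = \prod_{\substack{\chi \in X_m \\ \chi \neq \chi_0}} \bar{\Phi}_\chi(u) = \prod_{n=1}^{q^d-2} \bar{\Phi}_{\omega^n}(u).
\]
Substituting $\bar{\Phi}_{\omega^n}(u) = \phi_*(\bar{B}_n(u))$ and using that $\phi_*:\mathcal{R}_m[u] \to \mathcal{F}_M[u]$ is a ring isomorphism (hence multiplicative), I pull $\phi_*$ outside the product to obtain
\[
\bar{Z}_m(u) = \prod_{n=1}^{q^d-2} \phi_*(\bar{B}_n(u)) = \phi_*\!\left(\prod_{n=1}^{q^d-2} \bar{B}_n(u)\right),
\]
which is (\ref{eq12}). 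Part (2) is proved by the same manipulation applied to Proposition \ref{pr21}(2): the nontrivial characters of $X_m^+$ are exactly $\omega^n$ with $1 \le n \le q^d-2$ and $n \equiv 0 \bmod q-1$, so restricting the product to that range yields (\ref{eq 13}).

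There is no real obstacle; the argument is a formal bookkeeping that stitches together Proposition \ref{pr21}, the identification $\phi_*(\bar{B}_n(u)) = \bar{\Phi}_{\omega^n}(u)$, and the multiplicativity of $\phi_*$. The only point that deserves a line of care is the integrality $\Phi_\chi(u) \in \mathcal{O}_M[u]$ for nontrivial $\chi \in X_m^+$, which relies on (\ref{eq11}) rather than the raw definition involving division by $1-u$; once this is in hand, reduction mod $p\mathcal{O}_M$ is valid and the rest is automatic.
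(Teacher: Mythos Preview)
Your argument is correct and is exactly the route the paper takes: Proposition \ref{pr22} is stated as an immediate consequence of Proposition \ref{pr21} together with the identification $\phi_*(\bar{B}_n(u))=\bar{\Phi}_{\omega^n}(u)$ and the enumeration $X_m=\{\omega^n\}$ developed just before it. Your added remark about integrality of $\Phi_\chi(u)$ via (\ref{eq11}) is a welcome clarification, but otherwise there is nothing to distinguish your approach from the paper's.
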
 
 
%
%
%
%
Proposition \ref{pr22} leads the following relation
between $\lambda_m$(or $\lambda_m^+$) and $B_n(u)$.
\begin{Corollary}
\begin{eqnarray} 
&&\text{\rm (1) }
\lambda_m=\sum_{n=1}^{q^d-2}\deg \bar{B}_n(u),
\;\;\;\;\;\;\;\;\;\;\;\;\;\;\;\;\;\;\;\;\;\;\;\;\;\;\;\;\;\;\;\;\;\;\;\;\;\;\;\;\;\;\;\;\;\;
\;\;\;\;\;\;\;\;\;\;\;\;\;\;\;\;\;\;\;\;\;  \label{eq14}\\ \notag\\
&&\text{\rm (2) }
 \lambda_m^+=\sum_{n=1 \atop t \equiv 0 \mod q-1}^{q^d-2}
 \deg \bar{B}_n(u). \label{eq15}
\end{eqnarray}
\label{co21}
\end{Corollary}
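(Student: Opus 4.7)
The plan is to combine the standard $p$-adic characterization of the Hasse--Witt invariant with Proposition \ref{pr22}. Specifically, for any global function field $K$ with zeta numerator $Z_K(u) \in \mathbb{Z}[u]$, one has
\[
\lambda_K = \deg \bar{Z}_K(u),
\]
where $\bar{Z}_K(u) \in \mathbb{F}_p[u]$ is the reduction of $Z_K(u)$ modulo $p$. To see this, factor $Z_K(u) = \prod_{i=1}^{2g_K}(1-\alpha_i u)$ in $\bar{\mathbb{Q}}_p[u]$ and split the factors into those with $\alpha_i$ a $p$-adic unit and those with $\mathrm{ord}_p(\alpha_i) > 0$. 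Galois invariance over $\mathbb{Q}_p$ forces each partial product to lie in $\mathbb{Z}_p[u]$; the non-unit part reduces to $1 \mod p$ (all elementary symmetric functions of nonzero degree in the non-unit roots have positive valuation), while the unit part reduces to a polynomial whose leading coefficient is a unit, hence has degree exactly equal to the number of unit $\alpha_i$'s, which is $\lambda_K$. This classical statement is covered in chapter 8 of \cite{go2} and is the only non-formal input I need.

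With that identification in hand, I would apply it to $K_m$ and $K_m^+$ to obtain $\lambda_m = \deg \bar{Z}_m(u)$ and $\lambda_m^+ = \deg \bar{Z}_m^{(+)}(u)$. Then I would invoke Proposition \ref{pr22} to rewrite each of these as $\phi_*$ applied to a product of the $\bar{B}_n(u)$'s. Since $\phi_*:\mathcal{R}_m[u] \to \mathcal{F}_M[u]$ is a ring isomorphism between polynomial rings over fields, it preserves degrees, and since $\mathcal{F}_M[u]$ is an integral domain, the degree of a product is the sum of the degrees. Collecting these facts yields
\[
\lambda_m = \deg \bar{Z}_m(u) = \deg \prod_{n=1}^{q^d-2}\bar{B}_n(u) = \sum_{n=1}^{q^d-2} \deg \bar{B}_n(u),
\]
and analogously for $\lambda_m^+$, where the product (hence the sum) is restricted to indices $n$ with $n \equiv 0 \mod q-1$.

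The only substantive step is the identification $\lambda_K = \deg \bar{Z}_K(u)$; the rest is bookkeeping using that $\phi_*$ preserves degrees and that polynomial degree is additive over products in an integral domain. I therefore expect no real obstacle beyond accurately invoking the classical $p$-adic fact from \cite{go2}.
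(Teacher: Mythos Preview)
Your proposal is correct and follows essentially the same route as the paper: invoke the classical identity $\lambda_K=\deg \bar{Z}_K(u)$ (the paper cites Proposition~11.20 in \cite{ro2} rather than \cite{go2}), then apply Proposition~\ref{pr22} and use that $\phi_*$ is a degree-preserving ring isomorphism so the degree of the product splits as the sum of $\deg \bar{B}_n(u)$. Your added remarks on why the non-unit factors reduce to $1$ and why degree is additive over products in an integral domain simply make explicit what the paper leaves implicit.
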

\begin{proof}
By Proposition 11.20 in \cite{ro2}, we have 
\[
\lambda_m=\deg \bar{Z}_m(u),\;\;\;\;
\lambda_m^+=\deg \bar{Z}_m^{(+)}(u).
\]
Hence we obtain Corollary \ref{co21} from Proposition \ref{pr22}.
\end{proof}
%
%
%
%
%
%
%
%
%
%
%
%
\subsection{Degrees of $B_n(u)$}
\quad In this subsection, we shall study the degree of $B_n(u)$.
To see this, we review some results of Gekeler \cite{ge}.

Fix an integer $d \ge 0$.
For 
$
n=a_0+a_1q+\cdots+a_{d-1}q^{d-1} \;\;(0 \le a_i \le q-1),
$
we define $e_i \; (1 \le i \le l(n))$ as follows:
%
%
\begin{eqnarray*}
n=\sum_{i=1}^{l(n)} q^{e_i} \;\;\;(0 \le e_i \le e_{i+1},\;  e_i<e_{i+q-1}).
\end{eqnarray*}
(Recall that $l(n)=a_0+a_1+\cdots+a_d$).
We set
\begin{eqnarray*}
\rho(n)=
\left\{
\begin{array}{ll}
-\infty  & \text{if } l(n)<q-1,\\\\
 n-\sum_{i=1}^{q-1}q^{e_i} & \text{Otherwise}. 
\end{array}
\right.
\end{eqnarray*}
Moreover
$\rho(-\infty)=-\infty$, 
$\rho^{(0)}(n)=n$, and 
$\rho^{(i)}=\rho^{(i-1)} \circ \rho$.
%
%
%
%
We also put $\deg 0 =- \infty$.
Then Gekeler showed the following result.
\begin{Proposition}(cf. Proposition 2.11 in \cite{ge})
\begin{eqnarray*}
\deg(s_i(n)) \le \rho^{(1)}(n) + \rho^{(2)}(n) \cdots +\rho^{(i)}(n).
\end{eqnarray*}
Moreover, the equality holds if q=p(:prime).
\label{pr23}
\end{Proposition}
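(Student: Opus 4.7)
The plan is to proceed by induction on $i$, using a recursion that mirrors the $\rho$-operation. First, I would observe that every monic polynomial $a$ of degree $i$ decomposes uniquely as $a = T a'' + b$ with $a'' \in A(i-1)$ monic and $b \in \mathbb{F}_q$. Expanding $(Ta''+b)^n$ by the binomial theorem and summing over $b$, using that $\sum_{b \in \mathbb{F}_q} b^j$ equals $-1$ when $j > 0$ and $(q-1) \mid j$ and equals $0$ otherwise (in particular at $j = 0$, where the sum is $q \equiv 0 \pmod p$), yields the recursion
\[
s_i(n) = -\sum_{t=1}^{\lfloor n/(q-1)\rfloor} \binom{n}{(q-1)t}\, T^{\,n-(q-1)t}\, s_{i-1}\bigl(n-(q-1)t\bigr).
\]

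Next, I would apply Lucas's theorem: only the $t$ with $\binom{n}{(q-1)t} \not\equiv 0 \pmod p$ survive, and this forces the base-$p$ (hence base-$q$) digits of $m := n-(q-1)t$ to be bounded by those of $n$. Writing $n = \sum_{s=1}^{l(n)} q^{e_s}$ with $e_1 \le \cdots \le e_{l(n)}$, such $m$ corresponds to retaining a sub-multiset of the $q^{e_s}$'s. Combined with the induction hypothesis $\deg s_{i-1}(m) \le \sum_{j=1}^{i-1} \rho^{(j)}(m)$, the inductive step reduces to the combinatorial inequality
\[
m + \sum_{j=1}^{i-1}\rho^{(j)}(m) \;\le\; \sum_{j=1}^{i}\rho^{(j)}(n)
\]
for every such $m$, with equality at $m = \rho(n)$. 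I would establish this by rewriting both sides as nonnegative integer combinations $\sum_s c_s q^{e_s}$ and comparing term by term; the left-hand side is maximized exactly when $m$ keeps the $l(n)-(q-1)$ largest $q^{e_s}$, i.e., $m = \rho(n)$, and then the identity $\rho^{(j)}(\rho(n)) = \rho^{(j+1)}(n)$ forces equality there.

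For the equality statement when $q=p$, I would finally verify that the contribution of $m = \rho(n)$ is not cancelled. Lucas's theorem gives $\binom{n}{n-\rho(n)} \not\equiv 0 \pmod p$, induction supplies a nonzero leading coefficient for $s_{i-1}(\rho(n))$, and the uniqueness of $\rho(n)$ as the maximizer in the combinatorial inequality above rules out cancellation from other summands. The main obstacle is precisely this combinatorial inequality: the remaining steps (binomial expansion, Lucas's theorem, and the induction itself) are mechanical, but verifying the inequality demands a careful digit-weighted accounting of how iterated $\rho$ behaves on sub-multisets of the $q$-power expansion of $n$.
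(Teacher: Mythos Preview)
The paper does not supply its own proof of this proposition; it is simply quoted from Gekeler (Proposition~2.11 in \cite{ge}) without argument, so there is nothing in the paper to compare your attempt against.

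That said, your outline is a correct sketch of the standard proof. The decomposition $a=Ta''+b$ with $a''\in A(i-1)$ and $b\in\mathbb{F}_q$ yields exactly the recursion you wrote, and Lucas's theorem restricts the surviving terms to those $m=n-(q-1)t$ whose base-$p$ digits (hence also base-$q$ digits, since digitwise domination in base $p$ implies it in base $q=p^e$) are dominated by those of $n$. The induction then reduces to the combinatorial inequality
\[
\sum_{j=0}^{i-1}\rho^{(j)}(m)\;\le\;\sum_{j=1}^{i}\rho^{(j)}(n)
\]
for such $m$, with equality precisely at $m=\rho(n)$. You correctly flag this as the real content of the argument. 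It can be established by interpreting both sides as weighted sums $\sum_s c_s q^{e_s}$ over the $q$-power expansion of $n$ and checking that the left side is maximized (strictly, among admissible $m\neq\rho(n)$) when one deletes exactly the $q-1$ smallest powers; the uniqueness of this maximizer is what prevents cancellation in the $q=p$ equality case. You have identified all of this but not carried out the verification, so your write-up is an accurate roadmap rather than a complete proof.
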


In particular, we have the following results.
%
%
%
%
\begin{Corollary}
If $l(n)/(q-1) <i$, then $s_i(n)=0$.
Assume that $q=p$. Then $l(n)/(p-1) <i$ if and only if $s_i(n)=0$. 
\label{co22}
\end{Corollary}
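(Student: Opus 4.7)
The plan is to reduce the corollary to Proposition \ref{pr23} by understanding how $\rho$ acts on the digit-sum $l$. First I would establish the key combinatorial lemma that $l(\rho(n)) = l(n) - (q-1)$ whenever $l(n) \ge q-1$. Since the sorted tuple $(e_1,\ldots,e_{l(n)})$ is exactly the multiset of base-$q$ digit positions of $n$ (the condition $e_i < e_{i+q-1}$ encodes precisely that each digit is at most $q-1$), the subtraction $n - \sum_{i=1}^{q-1} q^{e_i}$ merely erases the $q-1$ smallest entries from this multiset and introduces no base-$q$ borrows. Iterating, one sees by induction that $\rho^{(j)}(n) = -\infty$ exactly when $l(n) < j(q-1)$, i.e.\ when $l(n)/(q-1) < j$.

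For the first assertion, suppose $l(n)/(q-1) < i$. Then the lemma gives $\rho^{(i)}(n) = -\infty$, and hence $\rho^{(1)}(n) + \cdots + \rho^{(i)}(n) = -\infty$ under the convention $-\infty + x = -\infty$. Proposition \ref{pr23} then forces $\deg s_i(n) = -\infty$, which is precisely $s_i(n) = 0$.

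For the second assertion, when $q = p$ Proposition \ref{pr23} gives the equality $\deg s_i(n) = \rho^{(1)}(n) + \cdots + \rho^{(i)}(n)$. The implication $l(n)/(p-1) < i \Rightarrow s_i(n) = 0$ is a special case of (1). Conversely, if $s_i(n) = 0$, then the right-hand sum equals $-\infty$; since every finite $\rho^{(j)}(n)$ is a non-negative integer, some $\rho^{(j)}(n)$ with $j \le i$ must be $-\infty$, and the lemma then yields $l(n)/(p-1) < j \le i$.

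The main obstacle I anticipate is the bookkeeping for the lemma $l(\rho(n)) = l(n) - (q-1)$: one must verify carefully that the subtraction of the $q-1$ smallest powers $q^{e_i}$ is carry-free, which amounts to unpacking the role of the condition $e_i < e_{i+q-1}$ and identifying $(e_1,\ldots,e_{l(n)})$ with the digit positions of $n$ counted with multiplicity. Everything else is bookkeeping with the $-\infty$ convention and a direct appeal to Proposition \ref{pr23}.
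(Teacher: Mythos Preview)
Your proposal is correct and follows exactly the route the paper intends: the corollary is stated in the paper without explicit proof as an immediate consequence of Proposition~\ref{pr23}, and you have simply unpacked the combinatorial step $l(\rho(n)) = l(n) - (q-1)$ (hence $\rho^{(j)}(n)=-\infty \iff l(n) < j(q-1)$) that the paper leaves to the reader. Your handling of the $-\infty$ conventions and the carry-free verification are sound.
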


%
%
Next we set
\begin{eqnarray*}
C_n(u)=\sum_{i=0}^{\infty}s_i(n)u^i.
\end{eqnarray*}
From Corollary \ref{co22}, we see that $C_n(u) \in A[u]$.
Moreover,  we have the following result.
%
%
%
%
\begin{Lemma}
$\deg C_n(u) \le \Bigl[ \frac{l(n)}{q-1} \Bigr]$.
The equality holds if $q=p$.
\label{le21}
\end{Lemma}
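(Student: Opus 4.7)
The plan is to read both assertions directly off Corollary \ref{co22}, which already controls exactly when the individual coefficients $s_i(n)$ of $C_n(u)$ vanish; nothing more is really needed.

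For the upper bound, set $i_0 = [l(n)/(q-1)]$. Any index $i > i_0$ satisfies $i \ge i_0+1 > l(n)/(q-1)$, so the first (unconditional) assertion of Corollary \ref{co22} forces $s_i(n)=0$. Thus no monomial of degree greater than $i_0$ contributes to $C_n(u)$, giving $\deg C_n(u) \le i_0$. Note that this argument handles the potentially awkward case in which $l(n)/(q-1)$ is itself an integer: then $i_0 = l(n)/(q-1)$, and the strict inequality $i \ge i_0+1 > i_0$ still holds.

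For the equality in the case $q=p$, it suffices to verify that the leading coefficient $s_{i_0}(n)$ is nonzero. By the definition of $[\cdot]$ we have $i_0 \le l(n)/(p-1)$, which is the negation of the condition $l(n)/(p-1) < i_0$. The ``if and only if'' part of Corollary \ref{co22} therefore yields $s_{i_0}(n) \ne 0$, so $\deg C_n(u) = i_0$.

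This is essentially a bookkeeping step, and I do not anticipate any real obstacle; the only thing to be careful about is translating between the real-valued bound $l(n)/(q-1)$ appearing in Corollary \ref{co22} and its integer floor $[l(n)/(q-1)]$, which is exactly what the elementary inequality $i \ge i_0 + 1$ used above accomplishes.
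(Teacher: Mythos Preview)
Your argument is correct and is exactly the approach the paper takes: the paper's proof is the single line ``This follows from Corollary \ref{co22},'' and what you have written is simply the detailed unpacking of that sentence. There is nothing to add.
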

\begin{proof}
This follows from Corollary \ref{co22}.
\end{proof}
%
%
%
%
\begin{Lemma}
If $1 \le n \le q^d-2 \; (n \equiv 0 \mod q-1)$,
then $C_n(1)=0$.
\label{le22}
\end{Lemma}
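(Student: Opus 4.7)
The plan is to interpret $C_n(1)=\sum_{a\text{ monic}}a^n$ as a finite sum in $A$ (the finiteness coming from Lemma~\ref{le21}), and to evaluate the auxiliary quantity $\Sigma_N := \sum_{f\in A,\;\deg f\le N} f^n$ in two different ways for $N$ sufficiently large; matching the two expressions will force $C_n(1)=0$.

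The first evaluation will split each nonzero $f$ uniquely as $f=\alpha a$ with $\alpha\in\mathbb{F}_q^{\times}$ and $a$ monic of the same degree, giving
\[\Sigma_N=\Bigl(\sum_{\alpha\in\mathbb{F}_q^{\times}}\alpha^n\Bigr)\sum_{i=0}^{N}s_i(n).\]
Since $n>0$ and $(q-1)\mid n$, the bracketed factor equals $-1$ in $\mathbb{F}_q$. Once $N$ exceeds $l(n)/(q-1)$, Lemma~\ref{le21} makes $\sum_{i=0}^{N}s_i(n)=C_n(1)$, so $\Sigma_N=-C_n(1)$.

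The second evaluation will parametrize $f=\sum_{j=0}^{N}c_jT^{j}$ with $c_j\in\mathbb{F}_q$, expand $f^n$ via the multinomial theorem, and interchange the sums over the $c_j$'s with the sum over multi-indices $(k_0,\ldots,k_N)$ of weight $n$. Each inner coefficient sum $\sum_{c\in\mathbb{F}_q}c^{k_j}$ will vanish unless $k_j$ is a \emph{positive} multiple of $q-1$; the delicate point I expect to watch is that the case $k_j=0$ also kills its factor, because $\sum_{c\in\mathbb{F}_q}1=q\equiv 0\pmod p$. A surviving multinomial term therefore requires all $N+1$ exponents to be positive multiples of $q-1$, which forces $n\ge(N+1)(q-1)$. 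Choosing $N$ with $(N+1)(q-1)>n$ rules this out and yields $\Sigma_N=0$.

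Comparing the two evaluations gives $-C_n(1)=0$, hence the claim. The main obstacle, as already flagged, is the observation that the $k_j=0$ contribution also vanishes via $q\equiv 0\pmod p$; without this, the count of indices forced to be nonzero is uncontrolled and the estimate $(N+1)(q-1)\le n$ is lost, so the second evaluation would not close.
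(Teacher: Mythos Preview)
Your argument is correct. The paper's own proof consists of a single citation to Lemma~6.1 in Gekeler~\cite{ge}, so your route is genuinely different in that it supplies a self-contained elementary computation. Your two-way evaluation of $\Sigma_N=\sum_{\deg f\le N}f^{\,n}$ is the standard device: the orbit decomposition under $\mathbb{F}_q^\times$ yields $\Sigma_N=-\sum_{i\le N}s_i(n)=-C_n(1)$ (using $(q-1)\mid n$ to get $\sum_{\alpha\in\mathbb{F}_q^\times}\alpha^n=-1$), while the multinomial expansion together with $\sum_{c\in\mathbb{F}_q}c^{k}=0$ unless $k>0$ and $(q-1)\mid k$ forces $\Sigma_N=0$ as soon as $(N+1)(q-1)>n$. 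Both requirements on $N$ are met by any $N\ge n/(q-1)$, so a single choice suffices. What your approach buys is near-independence from the external reference; what the paper's citation buys is brevity. One small observation: you still invoke Lemma~\ref{le21} (hence ultimately Gekeler's Proposition~2.11 via Corollary~\ref{co22}) to know that $C_n(1)$ is a finite sum, so the dependence on~\cite{ge} is reduced but not entirely removed.
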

\begin{proof}
This follows from Lemma 6.1 in \cite{ge}
\end{proof}
From Lemma \ref{le22}, we obtain
\begin{eqnarray}
B_n(u)=
\left\{
\begin{array}{ll}
C_n(u)/(1-u)&  \text { if }  n \equiv 0 \mod q-1,\\\\
C_n(u) & \text{ if } n \not\equiv 0 \mod q-1
\end{array}
\right.
\label{eq16}
\end{eqnarray}
for $1 \le n \le q^d-2$.
From equation (\ref{eq16}),  we see that  $B_n(u)$ 
is only depend on $n$ ( independent on the choice of $d$).
%
%
%
%
\begin{Proposition}
\begin{eqnarray}
\begin{array}{ll}
(1) \;\;\; \deg B_n(u) \le \Bigl[ \frac{l(n)}{q-1} \Bigr]-1 & \;\; \text {if }  n \equiv 0 \mod q-1,\\ \\
(2)\;\;\; \deg B_n(u) \le \Bigl[ \frac{l(n)}{q-1} \Bigr] & \;\; \text {if }  n \not\equiv 0 \mod q-1.
\end{array}
\end{eqnarray}
In particular, equalities hold if $q=p$.
\label{pr24}
\end{Proposition}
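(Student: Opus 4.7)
The proposition is essentially a direct combination of the preceding two lemmas via the explicit description (\ref{eq16}) of $B_n(u)$ in terms of $C_n(u)$. My plan is as follows.

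First, I would handle the easy case $n \not\equiv 0 \pmod{q-1}$. Here equation (\ref{eq16}) gives $B_n(u) = C_n(u)$, so $\deg B_n(u) = \deg C_n(u)$, and Lemma \ref{le21} immediately yields both the inequality $\deg B_n(u) \le [l(n)/(q-1)]$ and the equality statement when $q = p$.

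Next, for the case $n \equiv 0 \pmod{q-1}$ with $1 \le n \le q^d - 2$, equation (\ref{eq16}) gives $B_n(u) = C_n(u)/(1-u)$. By Lemma \ref{le22}, $C_n(1) = 0$, so $1-u$ divides $C_n(u)$ in $A[u]$, which means the quotient $B_n(u)$ is an honest polynomial of degree exactly $\deg C_n(u) - 1$ (with the convention $\deg 0 = -\infty$, so the identity is preserved even when $C_n(u) = 0$). Applying Lemma \ref{le21} once more gives $\deg B_n(u) \le [l(n)/(q-1)] - 1$, with equality when $q = p$ since in that case Lemma \ref{le21} provides equality $\deg C_n(u) = [l(n)/(q-1)]$.

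There is no real obstacle here; the only small point to be careful about is that, in the case $n \equiv 0 \pmod{q-1}$, the polynomial identity $B_n(u)(1-u) = C_n(u)$ translates degree information without loss because the leading coefficient of $1-u$ is a unit, so divisibility in $A[u]$ is unambiguous and the degree drops by exactly one.
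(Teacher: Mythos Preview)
Your proof is correct and follows exactly the same route as the paper, which simply says ``This follows from Lemma \ref{le21}'' and relies on equation (\ref{eq16}) for the translation between $B_n(u)$ and $C_n(u)$. You have merely spelled out the details that the paper leaves implicit.
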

\begin{proof}
This follows from Lemma \ref{le21} .
\end{proof}
%
%
%
%
%
%
\section{A proof of Theorem \ref{th11}}
\quad  Our goal in this section is to prove Theorem \ref{th11}.
To do this,  we first show the following lemma.
%
%
%
%
\begin{Lemma}
For a positive integer $d$, we have
\begin{eqnarray}
\text{(1) } \sum_{n=1 \atop n \equiv 0 \mod q-1}^{q^d-2}
\Bigl[ \frac{l(n)}{q-1} \Bigr]=\frac{d}{2}\Bigl(\frac{q^d-1}{q-1}-1\Bigr), \label{eq18}
\;\;\;\;\;\;\;\;\;\;\;\;\;\;\;\;\;\;\;\;\;\;\;\;\;\;\;\;\;\;\;\;\;\;\;\;\;\;\;\;
\end{eqnarray}
\begin{eqnarray}
\text{(2) } \sum_{n=1 \atop n \not\equiv 0 \mod q-1}^{q^d-2}
\Bigl[ \frac{l(n)}{q-1} \Bigr]=\frac{(d-1)(q-2)(q^d-1)}{2(q-1)}.\label{eq19}
\;\;\;\;\;\;\;\;\;\;\;\;\;\;\;\;\;\;\;\;\;\;\;\;\;\;
\end{eqnarray}
\label{le31}
\end{Lemma}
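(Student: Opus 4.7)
The plan is to extend each sum to the full range $0 \le n \le q^d - 1$ and then exploit the digit-complement involution $\iota \colon n \mapsto q^d - 1 - n$. From the base-$q$ expansion $n = \sum a_i q^i$ and $\iota(n) = \sum (q-1-a_i)q^i$ one sees immediately that $l(n) + l(\iota(n)) = d(q-1)$, and since $q \equiv 1 \pmod{q-1}$ we also have $n \equiv l(n) \pmod{q-1}$. Hence $(q-1) \mid l(n)$ is equivalent to $(q-1) \mid n$, and the set $S := \{n \in [0,q^d-1] : (q-1) \mid n\}$ has cardinality $a := (q^d-1)/(q-1)+1$, while its complement has cardinality $b := (q^d-1)(q-2)/(q-1)$.

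The key combinatorial step is the pair identity
\[
\Bigl[\tfrac{l(n)}{q-1}\Bigr] + \Bigl[\tfrac{l(\iota(n))}{q-1}\Bigr] = \begin{cases} d & \text{if } n \in S,\\ d-1 & \text{if } n \notin S, \end{cases}
\]
obtained by writing $l(n) = k(q-1) + r$ with $0 \le r < q-1$, so that $l(\iota(n)) = (d-k)(q-1)-r$, and splitting on $r = 0$ versus $r > 0$. Summing over the $\iota$-stable set $S$ and using the bijectivity of $\iota$ then yields
\[
2 \sum_{n \in S}\Bigl[\tfrac{l(n)}{q-1}\Bigr] = d\,a.
\]
The left-hand side equals the sum in (\ref{eq18}) plus the boundary contributions at $n=0$ (value $0$) and $n = q^d-1$ (value $d$), so rearranging gives $\tfrac{da}{2} - d = \tfrac{d}{2}\bigl(\tfrac{q^d-1}{q-1}-1\bigr)$, which is identity (1).

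For identity (2), both excluded endpoints $0$ and $q^d-1$ lie in $S$, so they do not enter the sum at all, and the same doubling applied to $[0,q^d-1]\setminus S$ gives directly
\[
2\sum_{n=1,\,(q-1)\nmid n}^{q^d-2}\Bigl[\tfrac{l(n)}{q-1}\Bigr] = (d-1)b = \tfrac{(d-1)(q-2)(q^d-1)}{q-1},
\]
as required. The main potential obstacle is the fixed point $n^* = (q^d-1)/2$ that $\iota$ acquires when $q$ is odd; however a direct check (noting that its base-$q$ digits are all $(q-1)/2$) shows that the pair identity still holds at $n^*$ in the form $2[l(n^*)/(q-1)] = d$ or $d-1$ according to the parity of $d$, so the averaging argument goes through unchanged. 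Beyond this, everything is pure combinatorial bookkeeping.
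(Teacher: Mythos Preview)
Your proof is correct and follows essentially the same approach as the paper: both use the involution $n\mapsto q^d-1-n$, the identity $l(n)+l(q^d-1-n)=d(q-1)$, and the resulting pair formula $[\,l(n)/(q-1)\,]+[\,l(q^d-1-n)/(q-1)\,]=d$ or $d-1$ according to whether $(q-1)\mid n$. The only cosmetic differences are that you extend the range to $[0,q^d-1]$ and subtract endpoint contributions (the paper works directly on $[1,q^d-2]$), and that your fixed-point discussion, while correct, is unnecessary: the doubling $\sum_{n}(f(n)+f(\iota(n)))=2\sum_{n}f(n)$ holds for any self-bijection $\iota$ regardless of fixed points.
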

%
%
%
%
\begin{proof}
We can check that
\begin{eqnarray*}
l(n)+l(q^d-1-n)=(q-1)d
\end{eqnarray*} 
for $1 \le n \le q^d-2$.
Assume that $n \equiv 0 \mod q-1$.
Since $l(n) \equiv l(q^d-1-n) \equiv 0 \mod q-1$,
we have 
\begin{eqnarray*}
\Bigl[ \frac{l(n)}{q-1} \Bigr]+\Bigl[ \frac{l(q^d-1-n)}{q-1} \Bigr]=d.
\end{eqnarray*}
Therefore,
\begin{eqnarray*}
\sum_{n=1 \atop n \equiv 0 \mod q-1}^{q^d-2}
\Bigl{\{}
\Bigl[ \frac{l(n)}{q-1} \Bigr]+\Bigl[ \frac{l(q^d-1-n)}{q-1} \Bigr] \Bigr{\}}
=d \Bigl(\frac{q^d-1}{q-1}-1 \Bigr).
\end{eqnarray*}
This leads equation (\ref{eq18}).
%
%
%
%
%
%
Next we assume that $n  \not\equiv 0 \mod q-1$.
Then
\begin{eqnarray*}
\Bigl[ \frac{l(n)}{q-1} \Bigr]+\Bigl[ \frac{l(q^d-1-n)}{q-1} \Bigr]=d-1.
\end{eqnarray*}
Therefore,
\begin{eqnarray*}
\sum_{n=1 \atop n \not\equiv 0 \mod q-1}^{q^d-2}
\Bigl{\{}
\Bigl[ \frac{l(n)}{q-1} \Bigr]+\Bigl[ \frac{l(q^d-1-n)}{q-1} \Bigr]
\Bigr{\}}
=\frac{{(d-1)(q-2)(q^d-1)}}{(q-1)}.
\end{eqnarray*}
Hence we obtain equation (\ref{eq19}).
\end{proof}
%
%
%
%
%
%
%
%
%
%
%
%
%
Now we give the proof of Theorem \ref{th11}.
\begin{proof}
One shows that $g_m$, $g_m^+$ can be
calculated as follows
%
%
\begin{eqnarray}
2g_m&=&(dq-d-q)\Bigl(\frac{q^d-1}{q-1}\Bigr)-(d-2) \label{eqA}, \\ \notag \\
2g_m^+&=&(d-2)\Bigl(\frac{q^d-1}{q-1}-1\Bigr) \label{eqB}
\end{eqnarray}
(cf. [K-M]).
By comparing with Lemma \ref{le31}, we obtain
%
%
\begin{eqnarray}
g_m&=&\sum_{n=1 \atop n \equiv 0 \mod q-1}^{q^d-2}
\Bigl( \Bigl[ \frac{l(n)}{q-1} \Bigr]-1\Bigr) +
\sum_{n=1 \atop n \not\equiv 0 \mod q-1}^{q^d-2}
\Bigl[ \frac{l(n)}{q-1} \Bigr], \label{eq20} \\ \notag\\
g_m^+&=&\sum_{n=1 \atop n \equiv 0 \mod q-1}^{q^2-2}
\Bigl(\Bigl[ \frac{l(n)}{q-1} \Bigr]-1 \Bigr) \label{eq21}.
\end{eqnarray}
%
%
%

First we assume that $\lambda_m=g_m$. 
Then, by Corollary \ref{co21} and Proposition \ref{pr24}, and equation (\ref{eq20}),
we see that equation (\ref{eq2}) holds. 
Conversely, we assume that equation (\ref{eq2}) holds. 
Then, by Corollary \ref{co21} and equation (\ref{eq20}), 
we obtain $\lambda_m=g_m$.
This complete the proof of the part 1 of Theorem \ref{th11}.

By the same arguments, we can prove  the part 2 of Theorem \ref{th11}.
\end{proof}

\begin{Remark}
{\rm
%
%
%
%
%
%
From the proof of Theorem \ref{th11},
 we have the following results.
\begin{enumerate}
\item
If $K_m$ is ordinary, then 
\begin{eqnarray*}
\deg \bar{B}_n(u)=\deg B_n(u)=
\left\{
\begin{array}{ll}
\Bigl[\frac{l(n)}{q-1}\Bigr]-1  & \text{if }  n \equiv 0 \mod q-1,\\ \\
\Bigl[\frac{l(n)}{q-1}\Bigr] & \text {if }  n \not\equiv 0 \mod q-1
\end{array}
\right.
\end{eqnarray*}
for all $1 \le n \le q^d-2$.
\item
If $K_m^+$ is ordinary,
then
\begin{eqnarray*}
\deg \bar{B}_n(u)=\deg B_n(u)
=\Bigl[\frac{l(n)}{q-1}\Bigr]-1 
\end{eqnarray*}
for all $1 \le n \le q^d-2 \;\;(n \equiv 0 \mod q-1)$.
\end{enumerate}
}
\end{Remark}
By using Theorem \ref{th11}, 
we determine all ordinary cyclotomic function field
in the case of $q \neq p$.
%
%
%
%
%
%
\begin{Corollary}
We assume that $q \neq p$. 
Let $m$ be a monic irreducible polynomial.
Then we have the following results.
\begin{enumerate}
\item $K_m$ is ordinary if and only if $\deg m = 1$.
\item $K_m^+$ is ordinary if and only if $\deg m \le 2$.
\end{enumerate}
\label{co31}
\end{Corollary}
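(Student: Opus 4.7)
The plan is to deduce the Corollary from Theorem~\ref{th11} combined with the genus formulas (\ref{eqA})--(\ref{eqB}). The ``if'' directions are immediate: (\ref{eqA}) gives $g_m = 0$ when $d = 1$ and (\ref{eqB}) gives $g_m^+ = 0$ when $d \le 2$, so the relevant Jacobian is trivial and the field is vacuously ordinary. For the ``only if'' directions I will exhibit, in each case, a single $n$ with $1 \le n \le q^d - 2$ that forces the failure of (\ref{eq2}) or (\ref{eq3}) whenever $q = p^e$ with $e \ge 2$.

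For part~(1), assume $d \ge 2$ and take $n = p + (q - p) q$; its base-$q$ digits are $(p, q-p)$, so $l(n) = q$, $n \equiv 1 \pmod{q-1}$, and (\ref{eq2}) predicts $\deg \bar B_n = 1$. Computing $s_1(n) = \sum_{c \in \mathbb{F}_q}(T+c)^n$ by writing $(T+c)^{(q-p)q} = \sum_j \binom{q-p}{j} T^{jq} c^{q-p-j}$ (via Frobenius) and summing $c$ using $\sum_{c \in \mathbb{F}_q} c^k \in \{-1, 0\}$, only the cross-terms with $(T, T^q)$-multidegree summing to $1$ survive, giving $s_1(n) = -(pT + (q-p) T^q)$. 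Both coefficients vanish in $\mathbb{F}_p$ since $e \ge 2$ implies $p \mid (q - p)$, so $s_1(n) = 0$. Corollary~\ref{co22} eliminates $s_i(n)$ for $i \ge 2$, hence $B_n(u) = 1$ and $\deg \bar B_n = 0 < 1$, contradicting (\ref{eq2}).

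For part~(2), assume $d \ge 3$. The plan is to build $n \equiv 0 \pmod{q-1}$ with $l(n) = 2(q-1)$ (expected degree $1$) and $s_1(n) = -1$ in $A$; then the $u$-coefficient $1 + s_1(n)$ of $B_n(u)$ vanishes, and Proposition~\ref{pr24} forces $B_n(u) = 1$, so $\deg \bar B_n = 0 < 1$ violates (\ref{eq3}). Specify $n$ by its base-$p$ digits: $a_0 = p - 2$, $a_r = p - 1$ for $1 \le r \le e - 1$, $a_{2e-1} = p - 1$, $a_{3e-1} = 1$, and $a_i = 0$ otherwise (all positions lie within $de$ since $d \ge 3$, and $a_0 < p - 1$ guarantees $n \le q^d - 2$). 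Grouping digits by residue class modulo $e$, the block sums $A_r := \sum_j a_{je+r}$ realize $(p-2, p-1, \ldots, p-1, 2p-1)$, and a short calculation gives $\sum_r A_r p^r = 2(q-1)$. To see $s_1(n) = -1$, combine $s_1(n) = -\sum_{k > 0,\ (q-1) \mid k} \binom{n}{k} T^{n-k}$ with Lucas' theorem, which restricts $k$ to the digit sublattice $\{\sum_i b_i p^i : b_i \le a_i\}$; for such $k$, $k \bmod (q-1) = \sum_r B_r p^r$ with $B_r := \sum_j b_{je+r} \le A_r$, a value in $[0, 2(q-1)]$ whose multiples of $q-1$ are $0$, $q-1$, and $2(q-1)$. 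The extremes correspond uniquely to $k = 0$ (which contributes nothing since $\sum_c c^0 = 0$) and $k = n$ (which contributes $-1$); the middle value $q-1$ is excluded because it would force $B_0 \equiv p - 1 \pmod p$, impossible with $B_0 \le A_0 = p - 2$. Thus $s_1(n) = -1$, completing the argument.

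The main obstacle is discovering the asymmetric block-sum vector $(p-2, p-1, \ldots, p-1, 2p-1)$ in part~(2). The symmetric choice $(p-1, \ldots, p-1)$ yields only $l(n) = q - 1$, below the threshold $2(q-1)$ needed for nonzero expected degree, while simple doublings introduce intermediate sublattice multiples of $q-1$. The asymmetric shift $A_0 = p - 2$ (compensated by $A_{e-1} = 2p - 1$) is calibrated precisely to block the residue $q-1$ via the base-$p$ constraint on $B_0$, and this calibration is the single point where the hypothesis $q \ne p$ genuinely enters the argument.
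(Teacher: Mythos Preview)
Your proof is correct. Part~(1) follows the paper's route almost exactly: the paper takes $n=(q-p)+pq$ and invokes Gekeler's formula to get $s_1(n)=-\binom{p}{p-1}(T^p-T)=0$, while you take the mirror image $n=p+(q-p)q$ and compute $s_1(n)$ directly. (A small slip: your intermediate expression $-(pT+(q-p)T^q)$ is not quite the outcome of the expansion you describe --- the actual value is $-(q-p)T^q$ --- but since $p\mid(q-p)$ this is zero either way, and your conclusion stands.)

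Part~(2), however, is genuinely different. The paper's trick is to pick $n_0$ with $l(n_0)=q-1$, namely $n_0=1+(q-q/p-1)q+(q/p-1)q^2$; then Proposition~\ref{pr24} forces $1+s_1(n_0)=0$ automatically, and setting $n=pn_0=p+(q-p)q+(q-2)q^2$ one gets $1+s_1(n)=(1+s_1(n_0))^p=0$ by Frobenius. This is quick and conceptual but relies on spotting the Frobenius identity. Your approach instead engineers the base-$p$ digit pattern of $n$ directly so that the Lucas sublattice $\{k:\binom{n}{k}\neq 0\}$ admits no $k$ with $0<k<n$ and $(q-1)\mid k$, the obstruction being the block-sum bound $B_0\le p-2$. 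This is more combinatorial and self-contained --- it needs neither the auxiliary $n_0$ nor the Frobenius step --- and it makes explicit which digit constraint is doing the work, at the cost of a longer verification.
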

 \begin{proof}
 %
 %
 First we show the assertion 1.
Assume that $\deg m = 1$. Then we obtain $g_m=0$ by
equation (\ref{eqA}).  Hence $K_m$ is ordinary.
Next, we put $n=(q-p)+pq$.
Then $l(n)=q \not\equiv 0 \mod q-1$.
By Corollary 3.14 in \cite{ge}, we have
\begin{eqnarray*}
s_1(n)=-\left(
\begin{array}{c}
p \\
p-1
\end{array}
\right)
(T^p-T)=0.
\end{eqnarray*}
Hence $B_n(u)=1$. 
Notice that $\deg B_n(u) <[\frac{l(n)}{q-1}]$. 
It follows that
 $K_m$ is not ordinary if $\deg m \ge 2$.
This leads the assertion 1 of Corollary \ref{co31}.

%
%
%
Secondly, we will show the assertion 2 of Corollary \ref{co31}.
By equation (\ref{eqB}),
we see that $K_m^+$ is ordinary if $\deg m \le 2$.
Next, 
we put $n=p+(q-p)q+(q-2)q^2$, and
$n_0=n/p=1+(q-q/p-1)q+(q/p-1)q^2$.
Then we have $l(n)=2(q-1)$, and $l(n_0)=q-1$.
By Proposition \ref{pr24},  we have $1+s_{1}(n_0)=0$.
Noting that 
\begin{eqnarray*}
1+s_1(n)=(1+s_1(n_0))^p=0,
\end{eqnarray*}
we have $B_n(u)=1$.
Hence $\deg B_n(u)<[\frac{l(n)}{q-1}]-1$.
It follows that
$K_m^+$ is not ordinary if $\deg m \ge 3$.
This leads the assertion 2 of Corollary \ref{co31}.
 \end{proof}
 
 The above corollary  is not true in the case $q=p$.
 We will see this in the next section.

%
%
%
%
%
%
%
%
\section{Some examples of ordinary cyclotomic function field}
\quad 
In this section, we assume $q=p$.
As an application of Theorem \ref{th11}, we shall construct some
 examples of ordinary cyclotomic function fields.
%
%
%
%
\begin{Proposition}
Assume $m \in A$ is a monic irreducible polynomial 
of degree two. Then $K_m^+$, and $K_m$ are ordinary.
\label{pr41}
\end{Proposition}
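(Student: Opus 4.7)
My plan is to verify the criterion of Theorem~\ref{th11} directly, exploiting that $d=2$ forces $B_n(u)$ to be at most linear in $u$. Specifically, for $d=2$ the defining formula gives $B_n(u)=s_0(n)=1$ whenever $n\equiv 0\pmod{p-1}$, and $B_n(u)=1+s_1(n)u$ otherwise. By Proposition~\ref{pr24} the target values in Theorem~\ref{th11} are already upper bounds for $\deg\bar B_n(u)$, so it suffices to show that the leading coefficient of $B_n(u)$ is nonzero modulo $m$.

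In the range $1\le n\le p^2-2$ one has $l(n)\le 2p-3$. If $n\equiv 0\pmod{p-1}$, this forces $l(n)=p-1$, the target degree is $0$, and $B_n=1$ makes this immediate. If $n\not\equiv 0\pmod{p-1}$ and $l(n)<p-1$, Corollary~\ref{co22} gives $s_1(n)=0$ and the target degree is again $0$. So the whole problem collapses to the case $n\not\equiv 0\pmod{p-1}$ with $p\le l(n)\le 2p-3$, where one must verify $s_1(n)\not\equiv 0\pmod m$.

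For this, I would compute $s_1(n)=\sum_{c\in\mathbb{F}_p}(T+c)^n$ explicitly. Writing $n=a_0+a_1 p$ and using $c^p=c$ in $\mathbb{F}_p$ gives $(T+c)^n=(T+c)^{a_0}(T^p+c)^{a_1}$. Setting $V=T^p-T$ and expanding $(T^p+c)^{a_1}=\bigl((T+c)+V\bigr)^{a_1}$ binomially, the sum over $c$ reorganises into $\sum_{j=0}^{a_1}\binom{a_1}{j}V^{a_1-j}S(a_0+j)$, where $S(m):=\sum_{c\in\mathbb{F}_p}(T+c)^m$. A short check shows $S(m)=0$ for $0\le m\le 2p-3$ except $S(p-1)=-1$, and because $l(n)\le 2p-3$ only the single term $a_0+j=p-1$ survives. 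The surviving coefficient $\binom{a_1}{p-1-a_0}$ is nonzero modulo $p$ by Lucas's theorem (its digits in base $p$ are $a_1$ and $p-1-a_0$, both admissible given $l(n)\ge p-1$), yielding $s_1(n)=-\binom{a_1}{p-1-a_0}(T^p-T)^{l(n)-p+1}$.

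The finish is then painless: since $T^p-T=\prod_{a\in\mathbb{F}_p}(T-a)$ factors into distinct linear polynomials over $\mathbb{F}_p$, every irreducible factor of $s_1(n)$ is linear, so no monic irreducible $m$ of degree $2$ can divide $s_1(n)$. I expect the main obstacle to be organising the expansion cleanly so that the collapse to a single monomial in $V$ is transparent; the bound $l(n)\le 2p-3$ (enforced by $\deg m=2$) is precisely what rules out a second surviving term at $a_0+j=2p-2$ and makes the formula so clean. Notably this is also what fails at $\deg m=3$, consistent with the second assertion of Corollary~\ref{co31} and the counterexamples constructed there.
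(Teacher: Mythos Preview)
Your proof is correct and follows essentially the same route as the paper: verify the criterion of Theorem~\ref{th11} by reducing to the single nontrivial case $p\le l(n)\le 2p-3$ and using the explicit formula $s_1(n)=-\binom{a_1}{p-1-a_0}(T^p-T)^{l(n)-p+1}$, then observe that an irreducible quadratic cannot divide a power of $T^p-T$. The only differences are cosmetic: the paper quotes this formula from Gekeler (Corollary~3.14 in \cite{ge}) while you derive it directly, and the paper dispatches $K_m^+$ via $g_m^+=0$ from~(\ref{eqB}) rather than checking the criterion, but both amount to the same observation.
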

%
%
\begin{proof}
From equation (\ref{eqA}), we have $g_m^+=0$.
Hence $K_m^+$ is ordinary.

Next we will show that $K_m$ is ordinary.
To see this, we shall see that equation (2) holds.
We first consider the case $l(n) \le p-1$.
By Proposition \ref{pr24}, we have $B_n(u)=1$.
Hence equation (\ref{eq2}) holds in this case.

Secondly, we consider the case $p \le l(n) < 2(p-1)$.
Noting that $n \not\equiv 0 \mod p-1$, we obtain
\begin{eqnarray*}
B_n(u)=1+s_1(n)u.
\end{eqnarray*}
Here we put $n=a+bp\;(0 \le a,\;b \le p-1)$.
Then Gekeler  showed
\begin{eqnarray*}
s_1(n)=-\left(
\begin{array}{c}
b \\
p-1-a
\end{array}
\right)
(T^p-T)^{a+b-(p-1)}
\end{eqnarray*}
(cf. Corollary 3.14 in \cite{ge}).
Hence $s_1(n) \not\equiv 0 \mod m$.
Therfore equation (\ref{eq2}) holds in this case.
This complete the proof of Proposition \ref{pr41}.
\end{proof}
%
%
%
%
\begin{Proposition}
Assume that $m \in A$ is a monic irreducible polynomial 
of degree three. Then $K_m^+$ is ordinary.
\label{pr42}
\end{Proposition}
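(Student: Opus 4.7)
The plan is to apply Theorem~\ref{th11}(2). Since $q=p$, Proposition~\ref{pr24} provides the unreduced equality $\deg B_n(u) = \bigl[\frac{l(n)}{p-1}\bigr] - 1$ for every $n \equiv 0 \pmod{p-1}$, so ordinariness of $K_m^+$ is equivalent to showing that the leading coefficient of $B_n(u) \in A[u]$ is coprime to $m$ for every $1 \le n \le p^3-2$ with $n \equiv 0 \pmod{p-1}$. For such $n$, the congruence $l(n) \equiv n \pmod{p-1}$ together with $0 < n < p^3-1$ forces $l(n) \in \{p-1,\, 2(p-1)\}$.

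When $l(n)=p-1$, Lemma~\ref{le22} gives $s_1(n) = -1$ and hence $B_n(u) = 1$, whose leading coefficient is trivially coprime to $m$. When $l(n)=2(p-1)$, one has $C_n(u)=1+s_1(n)u+s_2(n)u^2$ with $C_n(1)=0$, hence $B_n(u) = 1 + (1+s_1(n))u$; the task reduces to showing $1 + s_1(n) \not\equiv 0 \pmod m$. Writing $n = a_0 + a_1 p + a_2 p^2$ and using the Frobenius identity $(T+c)^{p^i} = T^{p^i} + c$ for $c \in \mathbb{F}_p$, one obtains
\[
s_1(n) = \sum_{c \in \mathbb{F}_p} (T+c)^{a_0}(T^p+c)^{a_1}(T^{p^2}+c)^{a_2}.
\]
Expanding as a polynomial in $c$ and using that $\sum_{c \in \mathbb{F}_p} c^k$ equals $-1$ if $(p-1)\mid k$ and $k>0$, and $0$ otherwise, only the powers $c^{p-1}$ and $c^{2(p-1)}$ contribute; the coefficient of $c^{2(p-1)}$ is $1$, which cancels with the outer $+1$, leaving $1+s_1(n) = -b$ where
\[
b = \sum_{\substack{b_0+b_1+b_2=p-1 \\ 0 \le b_i \le a_i}} \binom{a_0}{b_0}\binom{a_1}{b_1}\binom{a_2}{b_2}\, T^{b_0+pb_1+p^2 b_2} \in \mathbb{F}_p[T].
\]

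It remains to show $m \nmid b$; since $m$ is irreducible of degree $3$, it suffices to prove that every irreducible factor of $b$ in $\mathbb{F}_p[T]$ has degree at most $2$, i.e.\ every root of $b$ in $\overline{\mathbb{F}_p}$ lies in $\mathbb{F}_{p^2}$. When some $a_i$ equals $p-1$---say $a_0 = p-1$, so $a_1 + a_2 = p-1$---the constraint $b_i \le a_i$ is automatic from $b_0+b_1+b_2 = p-1$, and substituting $b_0 = p-1-b_1-b_2$ together with $\binom{p-1}{b_0} \equiv (-1)^{b_0} \pmod p$ allows the sum to factor by the binomial theorem as
\[
b = T^{p-1}(T^{p-1}-1)^{a_1}(T^{p^2-1}-1)^{a_2},
\]
each of whose factors splits over $\mathbb{F}_{p^2}$. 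Analogous closed forms involving only $T$, $T^{p-1}-1$, and $T^{p^2-1}-1$ handle the symmetric sub-cases $a_1 = p-1$ and $a_2 = p-1$. Together these dispatch every admissible triple when $p \le 3$. The remaining case---$1 \le a_i \le p-2$ for every $i$, which arises only for $p \ge 5$---is the principal technical obstacle, since no single binomial coefficient collapses. I plan to tackle it by reinterpreting $b$ as the $z^{p-1}$-coefficient of $(z-T)^{a_0}(z-T^p)^{a_1}(z-T^{p^2})^{a_2}$ and, working modulo $m$, exploiting the identity $m(z) = (z-T)(z-T^p)(z-T^{p^2})$ in $\mathcal{R}_m[z]$ to peel off $m(z)^{\min_i a_i}$ from the product, then extracting from the resulting quotient the same type of factorization of $b$ with all roots in $\mathbb{F}_{p^2}$.
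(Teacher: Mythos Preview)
Your reduction to showing $1+s_1(n)\not\equiv 0\pmod m$ when $l(n)=2(p-1)$ is correct, and your explicit factorizations when some $a_i=p-1$ are fine. The gap is the ``remaining case'' $1\le a_i\le p-2$: you only sketch a plan, and that plan does not cohere. You propose to work in $\mathcal{R}_m[z]$ using $m(z)=(z-T)(z-T^p)(z-T^{p^2})$, but your goal is a statement about the roots of $b$ \emph{as a polynomial in $T$ over $\mathbb{F}_p$}, and passing to $\mathcal{R}_m$ collapses $b$ to a field element, destroying exactly the factorization data you want. Peeling off $m(z)^{\min_i a_i}$ inside $\mathcal{R}_m[z]$ tells you something about the $z$-coefficients modulo $m$, not about the $T$-roots of $b$ in $\overline{\mathbb{F}_p}$; there is no visible bridge back. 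As written, the hard case (which is generic for $p\ge 5$) is not proved.

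The paper handles all $n$ with $l(n)=2(p-1)$ uniformly by a short symmetry/counting argument, avoiding any digit-by-digit computation. Set $f_n(T)=1+s_1(n)=1+\sum_{\alpha\in\mathbb{F}_p}(T+\alpha)^n$ and observe directly that $f_n(T+\alpha)=f_n(T)$ for $\alpha\in\mathbb{F}_p$, $f_n(\alpha T)=f_n(T)$ for $\alpha\in\mathbb{F}_p^\times$, and $T^n f_n(1/T)=f_n(T)$. If $m\mid f_n$ and $\omega$ is a root of $m$, then every element of
\[
W_1=\Bigl\{\,a+\tfrac{b}{\omega+c}:a,c\in\mathbb{F}_p,\ b\in\mathbb{F}_p^\times\Bigr\},\qquad
W_2=\{\,a+b\omega:a,b\in\mathbb{F}_p\,\}
\]
is a root of $f_n$. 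Because $[\mathbb{F}_p(\omega):\mathbb{F}_p]=3$, one checks $W_1\cap W_2=\varnothing$, $|W_1|=p^3-p^2$, $|W_2|=p^2$, giving $p^3$ distinct roots; but $\deg f_n\le n\le p^3-2$, a contradiction. This replaces your case analysis entirely and in particular covers the triples with all $a_i\in\{1,\dots,p-2\}$ where your argument stalls.
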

%
%
%
%

\begin{proof}
Fix an integer $n$ such that  
$1 \le n \le p^3-2 \; (n \equiv 0 \mod p-1)$.
Then we have
\begin{eqnarray*}
B_n(u)=1+f_n(T)u,
\end{eqnarray*}
where $f_n(T)$ is defined by
\begin{eqnarray*}
f_n(T)=1+s_1(n)=1+\sum_{\alpha \in \mathbb{F}_q} (T +\alpha)^n.
\end{eqnarray*}
We notice that $l(n)=p-1$ or $2(p-1)$.
First, we assume that $l(n)=p-1$.
Then, by Proposition \ref{pr24}, we have $B_n(u)=1$.
Hence equation (3) holds in this case.

%
%
%
%
Secondly, we consider the case $l(n)=2(p-1)$.
From Proposition \ref{pr24},  we see that  $f_n(T) \neq 0$.
Assume that $f_n(T) \equiv 0 \mod m$.
Let $\omega$ be a root of $m$.
Then $\omega$ is also a root of $f_n(T)$.
We put
%
%
\begin{eqnarray*}
W_1&=&\Bigl\{ a+\frac{b}{\omega+c} : 
a, c \in \mathbb{F}_q, \; b\in \mathbb{F}_q^{\times} \Bigr\},\\ \\
W_2&=&\Bigl\{ a+b\omega : 
a, b \in \mathbb{F}_q \Bigr\}.
\end{eqnarray*}
We can easily check that
(i) $f_n(T+\alpha)=f_n(T) \;\;(\alpha \in \mathbb{F}_q)$,
(ii) $f_n(\alpha T)=f_n(T) \;\;(\alpha \in \mathbb{F}_q^{\times})$,
(iii) $T^nf_n(1/T)=f_n(T)$.
Hence each element of $W_1 \cup W_2$ is also
a root of $f_n(T)$.
Notice that $\omega$ is a root of irreducible polynomial of degree $3$.
Hence 
\begin{eqnarray*}
W_1 \cap W_2 =\phi,\;\;{}^{\#}W_1=p^3-p^2,\;\;{}^{\#}W_2=p^2.
\end{eqnarray*}
Therefore $f_n(T)$ has distinct $p^3$ roots. 
However $\deg f_n(T) \le n \le p^3-2$. 
This is a contradiction.
Therefore $f_n(T) \not\equiv 0 \mod m$.
Hence equation (3) holds in this case.
\end{proof}
%
%
%
%
\begin{Remark}
{\rm
The above result is not true for $K_m$.
In fact, we consider the case $p=3$ and $m=T^3+2T+1$.
Then $g_m=19$, and $\lambda_m=18$.
}
\end{Remark}

%
%
%
%
%

\quad \\
Daisuke Shiomi\\
Graduate School of Mathematics,  Nagoya University\\
Furou-cho, Chikusa-ku, Nagoya 464-8602, Japan\\
Mail: m05019e@math.nagoya-u.ac.jp\\

\end{document}